\newtheorem{theorem}{Theorem}
\newtheorem{proposition}{Proposition}
\newtheorem{lemma}{Lemma}
\newtheorem{definition}{Definition}
\newtheorem{corollary}{Corollary}
\title{A Universal Quaternary Quadratic Form over Gaussian Integers}
\author{Felix Sidokhine}
\begin{document}
\maketitle

\section{Introduction}

The earliest study of universal forms should be credited to Lagrange with his famous four-square theorem over the integers. This result has been since revisited and re-proved using other approaches than Lagrange's original method. As pointed out in J. Conway's paper ``Universal Quadratic Forms and the Fifteen Theorem" \cite{Conway}, many number theoretists including Ramanujan have expressed interest in identifying universal quadratic forms. By using different techniques they have shown many results about quadratic forms over $\mathbb{Z}$.
\newline
\newline
\noindent The earliest study of this question could be attributed to L.J. Mordell \cite{Mordell1920}, however his work was focused on real fields. The first  attempt at studying quadratic forms over imaginary algebraic integers, is perhaps Niven's work about the representation of certain Gaussian integers as sums of two squares \cite{Niven1940}. However, imaginary fields proved themselves to be a very tough challenges, and I have been unable to track down any scholarly articles (or otherwise) pertaining to universal forms over them. 
\newline
\newline
\noindent On the other hand, real quadratic fields have shown themselves to be more docile, at least in the case of ternary forms. ``Ternary universal integral quadratic forms over real quadratic fields" \cite{Wai-kiuChan1996} by Chan, Kim and Raghavan contain a great overview of existing results and techniques pertaining to the problem. For the case of quaternary quadratic forms over such fields, Clark, Hicks, Thompson and Walters \cite{PeteL.Clark} have completed a very complete research paper using techniques from the geometry of numbers.
\newline
\newline
\noindent In this paper, I decided to re-visit the challenge of finding a universal form over $\mathbb{Z}[i]$ and I have shown that in fact at least one exists. By ``universal form", we mean a form $f$ with gaussian integer coefficients in gaussian integer variables which represent all gaussian integers \cite{Dickson1929}. Indeed, I speculate that only my use of purely algebraic arguments has allowed me to do so and it is very unlikely any other methods would have led to the desired results. Moreover, this work required me to think carefully about the structure of $\mathbb{Z}[i]$ and how to circumvent the problems caused by the unit group instead of trying to adapt to it.

\section{About Gaussian Integers}

While $\mathbb{Z}[i]$ is known to be a unique factorization domain, and even Euclidean, it is extremely difficult to reproduce any of the arguments used in the case of $\mathbb{Z}$. Many of these limitations come from two places: the unit group and the lack of well-ordering.
\newline
\newline
\noindent My first challenge is to eliminate the problems associated with the presence of the unit group. To do so, I first claim that any $z \in \mathbb{Z}[i]$ can be expressed as:

\begin{equation}
z = i^s z'
\end{equation} 

\noindent where $\frac{-\pi}{4} < Arg(z') \leq \frac{\pi}{4}$. One can verify this statement directly. In turn I can claim that the zone $(-\frac{\pi}{4},\frac{\pi}{4}]$ contains the set of prime elements $\Pi' \subset \Pi_{\mathbb{Z}[i]}$ which express uniquely any $z \in \mathbb{Z}[i]$. This ``uniqueness" is no longer the uniqueness as understood in algebraic number theory but a perfect copy of the uniqueness of factorization that holds in $\mathbb{Z}$. We shall call $\Pi'$ the set of canonical prime elements of $\mathbb{Z}[i]$.
\newline
\newline
\noindent This set $\Pi'$ can also be well-ordered (this also implies that any of the subsets of $\Pi'$ can be well-ordered, a fact that we will make use of later). This is done as follows: let $p$ and $p'$ be elements of $\Pi'$. $p$ preceeds $p'$ if $N(p) < N(p')$. Should their norms be equal, $p$ preceeds $p'$ if $\Im(p) < \Im(p')$. One can directly verify that this is indeed a well-ordering of $\Pi'$. I shall now introduce an additional mapping that will be used throughout this work:

\begin{definition}
Let $\nu : \mathbb{Z}[i] \to \mathbb{Z}^+ $ be defined as follows: Given $z = i^s p_1^{\alpha_1}...p_k^{\alpha_k}$ where $p_t \in \Pi'$, $\nu(z) = \sum_{t = 1}^k \alpha_t$. 
\end{definition}

One can immidiately notice several properties of $\nu$:

\begin{itemize}
\item $\nu(a) = 0 \iff a \in U$\\
\item $\nu(a) = 1 \iff a \in \Pi'$ \\
\item $\nu(ab) = \nu(a) + \nu(b)$ \\
\item If $a | b \land b \nmid a \Rightarrow \nu(a) < \nu(b)$
\end{itemize}

\section{Some Preliminary Results}

From here on, we shall only use canonical prime elements. Using norms, we shall partition the set of all canonical primes $\Pi'$ as follows: 

\begin{proposition}
$\Pi' = A \cup B \cup C$, where $A = \{ p \in \Pi' | N(p) \equiv 1 \mod 8 \}$, $B = \{1+i\}$, and $C = \{ p \in \Pi' | N(p) \equiv 5 \mod 8 \}$
\end{proposition}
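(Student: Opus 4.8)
The plan is to use the classical description of the prime elements of $\mathbb{Z}[i]$ and then simply read off their norms modulo $8$. Since $\mathbb{Z}[i]$ is a unique factorization domain, every prime element is, up to a unit, one of three kinds: the prime $1+i$ lying above the rational prime $2$; a prime $\pi$ with $N(\pi) = \ell$ for some rational prime $\ell \equiv 1 \pmod{4}$ (the split case); or a rational prime $q \equiv 3 \pmod{4}$ regarded as an element of $\mathbb{Z}[i]$, with $N(q) = q^2$ (the inert case). Because each element of $\Pi'$ differs from such a representative only by a power of $i$, and multiplication by a unit leaves the norm unchanged, every $p \in \Pi'$ has norm of exactly one of the shapes $2$, a rational prime $\equiv 1 \pmod{4}$, or the square of a rational prime $\equiv 3 \pmod{4}$.

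Next I would compute these norms modulo $8$. First, $N(1+i) = 2 \equiv 2 \pmod{8}$, which accounts for $B$. Second, if $N(p) = \ell$ with $\ell$ a rational prime $\equiv 1 \pmod{4}$, then $\ell$ is odd and cannot be $\equiv 3$ or $7 \pmod{8}$, so $\ell \equiv 1 \pmod{8}$ or $\ell \equiv 5 \pmod{8}$; these two cases correspond precisely to membership in $A$ and in $C$. Third, if $N(p) = q^2$ with $q \equiv 3 \pmod{4}$, then $q$ is odd, and the square of any odd integer is $\equiv 1 \pmod{8}$, hence $N(p) \equiv 1 \pmod{8}$ and $p \in A$. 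Combining the two steps gives $\Pi' \subseteq A \cup B \cup C$; the reverse inclusion holds by definition of $A,B,C$ as subsets of $\Pi'$. Disjointness is immediate, since no integer is congruent to two of $1,2,5$ modulo $8$, so the union is in fact a partition.

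The only genuine content is the classification of Gaussian primes together with the elementary fact that odd squares are $\equiv 1 \pmod{8}$; there is no serious obstacle. The one point deserving care is verifying that the split primes cannot have norm $\equiv 3$ or $7 \pmod{8}$ — which is exactly the congruence condition $\ell \equiv 1 \pmod{4}$ forcing $\ell \equiv 1$ or $5 \pmod 8$ — so that no fourth class is needed, and checking that the inert case never produces a norm outside $A$.
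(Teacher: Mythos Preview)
Your argument is correct. The paper itself states this proposition without proof, so there is no ``paper's approach'' to compare against; you have supplied what the author omitted. The only minor point worth making explicit is that $1+i$ really is the canonical representative in $\Pi'$ of the prime above $2$ (its argument is exactly $\pi/4$, which lies in the half-open interval $(-\pi/4,\pi/4]$), so that $B$ is genuinely the singleton $\{1+i\}$ rather than some associate. Otherwise your use of the standard trichotomy for Gaussian primes together with the observation that odd squares are $\equiv 1 \pmod 8$ is exactly the right justification.
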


\begin{lemma}\label{lemma2}
The equation $x^2 + iy^2 = pz$, $(x,y)=1$ is solvable if and only if $p$ belongs to $A \cup B$.
\end{lemma}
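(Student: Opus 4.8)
The plan is to reduce solvability to a statement about quadratic residues in the residue field $\mathbb{Z}[i]/(p)$ and then to compute that residue symbol. First I would observe that $x^2 + iy^2 = pz$ is solvable with $(x,y)=1$ if and only if $p \mid x^2 + iy^2$ for some coprime pair $x,y$: given such a pair one takes $z = (x^2+iy^2)/p$, and conversely $z$ is forced by $x$ and $y$. Since $p$ is a canonical prime and $i$ is a unit, $p$ cannot divide exactly one of $x$ and $y$ --- if $p \mid x$ but $p \nmid y$, then $p \mid iy^2$ forces $p \mid y$, a contradiction --- so for a coprime pair $p$ divides neither coordinate. Passing to the residue field and writing $t \equiv xy^{-1}$, the divisibility becomes $t^2 \equiv -i$; conversely, given any $t$ with $t^2 \equiv -i \pmod p$, the pair $(t,1)$ works after dividing out its gcd, which is coprime to $p$ and hence harmless. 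Thus Lemma~\ref{lemma2} is equivalent to the assertion that $-i$ is a square in $\mathbb{Z}[i]/(p)$ exactly when $p \in A \cup B$.

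Next I would dispatch $B$: for $p = 1+i$ the residue field is $\mathbb{F}_2$, in which $i \equiv 1$, so $-i \equiv 1 = 1^2$ is a square --- equivalently, $x=y=z=1$ already solves the original equation. It remains to treat odd $p$, i.e. $p \in A \cup C$, where $k_p := \mathbb{Z}[i]/(p)$ is the finite field of order $N(p)$ and of odd characteristic. In the cyclic group $k_p^\times$ of order $N(p)-1$, the element $-i$ has order exactly $4$, since $(-i)^2 = -1 \ne 1$ and $(-i)^4 = 1$. Therefore $-i$ is a square in $k_p$ if and only if $(-i)^{(N(p)-1)/2} = 1$, i.e. if and only if $4 \mid (N(p)-1)/2$, i.e. if and only if $N(p) \equiv 1 \pmod 8$. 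By the Proposition this says precisely $p \in A$; whereas for $p \in C$ one has $N(p) \equiv 5 \pmod 8$, so $(-i)^{(N(p)-1)/2} = (-i)^2 = -1 \ne 1$ and $-i$ is a non-residue. Combining this with the case $p = 1+i$ gives the lemma.

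The one point that needs care is the interplay of the unit group with the coprimality hypothesis: I must make sure the passage to ``$-i$ is a square mod $p$'' is a genuine equivalence and not merely one implication, which is exactly what the remark above (that $p$ divides neither coordinate of a coprime solution) secures. Beyond that, the argument is just the standard evaluation of a quadratic residue symbol in a finite field, applied uniformly to the split primes (where $N(p)$ is a rational prime $\equiv 1$ or $5 \pmod 8$) and to the inert primes (where $N(p)=q^2$ with $q\equiv 3 \pmod 4$, so automatically $N(p)\equiv 1\pmod 8$, placing them in $A$ and always yielding a solvable equation); so no case analysis is required beyond $p=1+i$ versus $p$ odd.
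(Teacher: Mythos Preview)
Your argument is correct and follows the same route as the paper --- reduce to whether $-i$ (equivalently $i$, since $-1=i^2$ is always a square) is a quadratic residue modulo $p$, then evaluate via Euler's criterion and the order of $i$ in $(\mathbb{Z}[i]/(p))^\times$. In fact your write-up is tighter than the paper's: you handle $p=1+i$ explicitly, justify the equivalence with the coprimality constraint, and remark on the inert primes, none of which the paper spells out.
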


\begin{proof}
Let us look at the following modular equation:
\begin{equation}
x^2 + iy^2 \equiv 0 \mod p
\end{equation} 

\noindent The solvability of the above equation is equivalent to $i$ being a quadratic residue modulo $p$.
\newline
\newline
\noindent Let us show that if a solution existed, then $p$ was an element of $A$. Let $x_0$ be a solution to the congruence $x_0^2 \equiv i \mod p$. Then:
\begin{equation}
(x_0^2)^\frac{N(p)-1}{2} \equiv i^\frac{N(p)-1}{2} \mod p
\end{equation}

\noindent However by Fermat's little theorem, $(x_0^2)^\frac{N(p)-1}{2} \equiv 1 \mod p$, which in turn implies:

\begin{equation}
 i^\frac{N(p)-1}{2} \equiv 1 \mod p
\end{equation}

\noindent which is only possible if $N(p) \equiv 1 \mod 8$. Which concludes the first part of the proof.
\newline
\newline
\noindent Let $p \in A$, then $x^2 - i \equiv 0 \mod p$ is solvable. Since $\mathbb{Z}[i]/(p)$ is a field, we can apply the same arguments as those used to identify quadratic residues in $\mathbb{Z}$, except amended by Fermat's little theorem for Gaussian integers \cite{HarryPollard1985}  to obtain the result.
\end{proof}

\begin{lemma}\label{lemma2}
The equation $x^2 + iy^2 = pz$, $(x,y)=1$ where $p \in A$ has a solution $(x_0,y_0,z_0)$ such that $z_0$ has only prime factors that preceed $p$.
\end{lemma}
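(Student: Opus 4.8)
\emph{Proof proposal.} The plan is a one-step descent in the style of the classical proof of Fermat's two-square theorem, carried out inside $\mathbb{Z}[i]$ and exploiting the sharp Euclidean division available there. The key reduction is this: it suffices to produce a solution $(x_0,y_0,z_0)$ with $(x_0,y_0)=1$, $z_0 \neq 0$, and $N(z_0) < N(p)$. Indeed, if $z_0 = i^s q_1 \cdots q_k$ is its canonical factorisation, then each $q_j$ divides $z_0$, so $N(q_j) \le N(z_0) < N(p)$, and hence $q_j$ precedes $p$ in the well-ordering of $\Pi'$; if $k=0$, i.e.\ $\nu(z_0)=0$, there is nothing to prove.

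First I would apply the previous lemma: since $p \in A$, the element $i$ (and therefore also $-i = i^3$) is a quadratic residue modulo $p$, so I may fix $t \in \mathbb{Z}[i]$ with $t^2 \equiv -i \pmod p$. Then, using the standard argument that $\mathbb{Z}[i]$ is Euclidean --- round the real and imaginary parts of $t/p$ to the nearest rational integers --- I would replace $t$ by the representative $x_0 \equiv t \pmod p$ with $N(x_0) \le \tfrac{1}{2}N(p)$. Set $y_0 := 1$. Since $x_0^2 + i y_0^2 \equiv t^2 + i \equiv 0 \pmod p$, the element $z_0 := (x_0^2 + i)/p$ lies in $\mathbb{Z}[i]$, and $(x_0,y_0) = (x_0,1) = 1$ automatically. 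Bounding $N(z_0)$ by the triangle inequality, $N(x_0^2 + i) = |x_0^2 + i|^2 \le (N(x_0)+1)^2 \le \big(\tfrac{1}{2}N(p)+1\big)^2$, so $N(z_0) = N(x_0^2+i)/N(p) < N(p)$, the last inequality being exactly $\tfrac{1}{2}N(p)+1 < N(p)$, which holds since $N(p) \equiv 1 \pmod 8$ forces $N(p) \ge 9$. Finally $z_0 \neq 0$: otherwise $x_0^2 = -i$, forcing $N(x_0)=1$, but no Gaussian unit squares to $-i$.

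The point demanding care is the quantitative estimate. The generic Euclidean bound $N(x_0) < N(p)$ is too weak --- it would only give $N(z_0) < N(p) + 2 + 1/N(p)$ --- so one genuinely needs the stronger remainder bound $N(x_0) \le \tfrac{1}{2}N(p)$ special to $\mathbb{Z}[i]$; and even then $N(z_0) < N(p)$ is tight, failing exactly at $N(p)=2$, i.e.\ for $1+i \in B$, in agreement with the hypothesis $p \in A$. A superficially cleaner route is a full infinite descent: among all coprime solutions choose one minimising $\nu(z)$, and argue that a prime factor $q \mid z$ with $N(q) \ge N(p)$ would, after reducing $x$ and $y$ modulo $q$ and using multiplicativity of the norm, yield a coprime solution with strictly smaller $\nu$; but this demands more bookkeeping with units and with $\nu$, so I would present the one-step version above.
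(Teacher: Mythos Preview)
Your argument is correct and follows the same one-step Euclidean descent as the paper: reduce modulo $p$ using the sharp remainder bound $N(r)\le\tfrac12 N(p)$ available in $\mathbb{Z}[i]$, then use the triangle inequality to force $N(z_0)<N(p)$, so every canonical prime factor of $z_0$ has strictly smaller norm and hence precedes $p$. The only difference is cosmetic: you fix $y_0=1$ from the outset, which makes the coprimality condition automatic and lets you skip the paper's step of dividing out $\gcd(r_{x_0},r_{y_0})$ after reducing both coordinates.
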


\begin{proof}

Let $(X_0,Y_0,Z_0)$ be some arbitrary solution of the above equation. Then we can use the euclidean property of $\mathbb{Z}[i]$ to perform division with remainder, such that:
\begin{equation}
\begin{cases}
X_0 \equiv r_{x_0} \mod p \text{ where } N(r_{x_0}) < \frac{N(p)}{2} \\
Y_0 \equiv r_{y_0} \mod p \text{ where } N(r_{y_0}) < \frac{N(p)}{2}
\end{cases}
\end{equation}

\noindent The above implies that $r_{x_0}$ and $r_{y_0}$ are a solution of $x^2 + iy^2 = pz$ with a corresponding new $z_0'$. Should they not be relatively prime, we reduce by their greatest common divisor. By using the triangle inequality for $\mathbb{C}$ we obtain:

\begin{equation}\footnote{$||z|| = \sqrt{N(z)}$}
N(p) > ||r_{x_0}||^2  + ||r_{y_0}||^2 \geq ||r_{x_0}^2  + ir_{y_0}^2|| = ||pz_0'||
\end{equation}

\noindent Which yields $N(p) > N(z_0')$, from which follows that any prime divisor of $z_0'$ preceeds $p$.
\end{proof}

\begin{theorem}\label{prop1}
The equation $x^2 + iy^2 = p$ is solvable for any $p \in A$.
\end{theorem}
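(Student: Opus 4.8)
The plan is to prove Theorem~\ref{prop1} by infinite descent, using the well-ordering of $\Pi'$ from Section~2. Write $V = \{\,x^2 + iy^2 : x,y \in \mathbb{Z}[i]\,\}$ for the set of values represented by the form. Two elementary facts carry the argument. First, the Brahmagupta--Fibonacci-type identity
\begin{equation}
(x^2 + iy^2)(u^2 + iv^2) = (xu - iyv)^2 + i(xv + yu)^2,
\end{equation}
which is verified by direct expansion (it is the multiplicativity of the norm form of the extension $\mathbb{Z}[i]\subset\mathbb{Z}[i][\sqrt{-i}]$, since $x^2+iy^2 = (x-\sqrt{-i}\,y)(x+\sqrt{-i}\,y)$) and shows $V$ is closed under multiplication. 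Second, $i(x^2 + iy^2) = (iy)^2 + ix^2$, so $V$ is closed under multiplication by units; hence if $x_0^2 + iy_0^2 = up$ for a unit $u$ then already $p\in V$. Lastly, $1+i = 1^2 + i\cdot1^2\in V$, which disposes of the prime of $B$.

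Suppose the set $S = \{\, p \in A : x^2 + iy^2 = p \text{ has no solution}\,\}$ is nonempty and let $p$ be its least element under the well-ordering; then every canonical prime that precedes $p$ and lies in $A$ is in $V$. By Lemma~\ref{lemma2} there is a triple $(x,y,z)$ with $x^2 + iy^2 = pz$, $(x,y)=1$, and every prime factor of $z$ preceding $p$; among all such triples take one with $\nu(z)$ minimal, which is possible since the values $\nu(z)$ range over a nonempty subset of the well-ordered set $\mathbb{Z}^{+}$. If $\nu(z)=0$ then $z$ is a unit, so $p\in V$ by the remark above, contradicting $p\in S$; hence $\nu(z)\ge 1$. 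Pick a canonical prime $q\mid z$. Since $q\mid x^2+iy^2$ and $(x,y)=1$, the triple $(x,y,(x^2+iy^2)/q)$ solves $x^2+iy^2 = qw$ with the first two entries coprime, so the solvability criterion (the ``if and only if'' lemma) forces $q\in A\cup B$; as $q$ also precedes $p$, in either case $q\in V$. Write $q = s^2+it^2$; then $(s,t)=1$ because $q$ is prime.

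The descent step: multiply $x^2 + iy^2 = pz$ by $q = s^2+it^2$ and apply the identity to get $(xs-iyt)^2 + i(xt+ys)^2 = pq^2(z/q)$. Reducing mod $q$, $(xt+ys)(xt-ys) = x^2t^2 - y^2s^2 \equiv 0$, so after possibly replacing $t$ by $-t$ (legitimate, since $q = s^2+i(-t)^2$ as well) we may assume $q\mid xt+ys$; the identity then forces $q\mid xs-iyt$. Dividing the two squared terms by $q$ gives $X^2 + iY^2 = p(z/q)$ with $X = (xs-iyt)/q$ and $Y = (xt+ys)/q$. Finally, put $d = \gcd(X,Y)$: since no prime factor of $z$ equals $p$ one checks $p\nmid d$, so $d^2\mid z/q$, and $(X/d,\,Y/d,\,z/(qd^2))$ is a triple of the same shape with $\nu$-value $\le \nu(z)-1 < \nu(z)$, contradicting minimality. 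Hence $S=\emptyset$, proving the theorem.

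The hard part is this descent step, where three ingredients must mesh: the right multiplicative identity for the \emph{twisted} form $x^2+iy^2$ (and the right choice of sign, $t$ or $-t$), rather than for $x^2+y^2$; the fact that the extracted prime factor $q$ of $z$ is itself represented --- which is precisely where minimality of $p$, the representability of $1+i$, and the solvability criterion are all brought to bear; and the divisibility bookkeeping, first that $q^2$ divides the product and then that dividing out $\gcd(X,Y)$ leaves an admissible triple with strictly smaller $\nu$, so that the descent genuinely terminates. Everything else is routine verification or a direct appeal to the lemmas already established.
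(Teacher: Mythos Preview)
Your proof is correct and follows essentially the same Fermat-style descent as the paper: reduce via Lemma~\ref{lemma2} to a triple whose auxiliary factor $z$ has only small primes, then peel off a prime factor of $z$ using its own representation and the multiplicative identity for $x^2+iy^2$, decreasing $\nu(z)$. If anything, your write-up is tighter than the paper's in two places the paper leaves implicit: you explain, via the solvability criterion, why every prime $q\mid z$ must lie in $A\cup B$ (hence is already represented), and you note that closure of $V$ under units handles the terminal case $\nu(z)=0$.
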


\begin{proof}

$A$ is well-ordered by the considerations we outlined in the section 2. Suppose that theorem \ref{prop1} holds for all $p_1,...,p_n$ from $A$ and does not hold for $p_{n+1}$.
\newline
\newline
\noindent Suppose we found a solution $(X_0,Y_0,Z_0)$ such that $X_0^2 + iY_0^2 = p_{n+1}Z_0$. By lemma \ref{lemma2} we can reduce this solution to a solution $(x_0,y_0,z_0)$ such that all the prime factors of $z_0$ preceed $p_{n+1}$. Say $z_0 = p_1^{\alpha_1}...p_k^{\alpha_k}$, where every $p_i$ preceeds $p_{n+1}$ and let $\nu(z_0) = m$. Let $p'$ be any prime divisor of $z_0$, then we can claim:

\begin{equation}
x_0'^2 + iy_0'^2 = p'
\end{equation}

\noindent which in turn yield the following simultaneous congruences:

\begin{equation}
\begin{cases}
x_0^2 \equiv -iy_0^2  \mod p' \\
x_0'^2 \equiv -iy_0'^2 \mod p'
\end{cases}
\end{equation}

\noindent By multiplying the congruences we obtain:

\begin{equation}
(x_0x_0')^2 + (y_0y_0')^2 \equiv 0 \mod p'
\end{equation}

\noindent Which in turn we can factor and obtain:

\begin{equation}
(x_0x_0' + iy_0y_0')(x_0x_0' - iy_0y_0') \equiv 0 \mod p'
\end{equation}

\noindent This last equation tells us that at least one of the factors must have been divisble by $p'$. Let us now return to the original equation and factor $z_0 = p'z_0'$ to reflect this fact:

\begin{equation}
x_0^2 + iy_0^2 = p_{n+1}p'z_0'
\end{equation}

\noindent now let us multiply the above equation through by $p'$:

\begin{equation}
(x_0^2 + iy_0^2)p' = p_{n+1}p'^2z_0'
\end{equation}

\noindent However this equation factors as follows:

\begin{equation}
(x_0x_0' \mp iy_0y_0')^2 + i(x_0y_0' \pm x_0'y_0)^2 = p_{n+1} p'^2 z_0'
\end{equation}

\noindent and by the indentity in (11) we can conclude that a reduction by $d$ ($d \equiv 0 \mod p'$ and $(d,p_{n+1})=1$) is possible, leading us to the following substitution:

\begin{equation}
\begin{cases}
x_0^1 = \frac{x_0x_0' \mp iy_0y_0'}{d} \\
y_0^1 = \frac{x_0y_0' \pm x_0'y_0}{d}
\end{cases}
\end{equation}

\noindent which in turn satisfy:

\begin{equation}
(x_0^1)^2 + i(y_0^1)^2 = p_{n+1}z_0^1 \text{ and } (x_0^1,y_0^1)=1
\end{equation}

\noindent Since we reduced by $d$, $\nu(z_0^1) < \nu(z_0)$, where $z_0^1 | z_0$. As the reduction can be repeated, we have that $\nu(z_0^k) < \nu(z_0^{k-1}) < ... < \nu(z_0)$, where $z_0^k | z_0$; and therefore after a maximum of $m$ reductions, $\nu(z_0^m) = 0$, concluding the proof.
\end{proof}

\begin{theorem}[Niven-Mordell Theorem]
If $a$ and $b$ are rational integers, a representation $x^2 + y^2 = a + 2ib$ in Gaussian integers is possible if and only if not both $a \equiv 2 (\mod 4), b \equiv 1 (\mod 2)$ are satisfied.
\end{theorem}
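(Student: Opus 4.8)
The plan is to turn the two-squares equation over $\mathbb{Z}[i]$ into a factorisation problem and then dispose of both directions by looking only at the ramified prime $1+i$. The key reduction, which I would isolate as a small lemma, is: for $w\in\mathbb{Z}[i]$, the equation $x^{2}+y^{2}=w$ is solvable in Gaussian integers if and only if $w=uv$ for some $u,v\in\mathbb{Z}[i]$ with $u\equiv v\pmod 2$. One direction is immediate by putting $u=x+iy$, $v=x-iy$, so that $uv=x^{2}+y^{2}$ and $u-v=2iy\equiv 0\pmod 2$; conversely, from such a pair one sets $x=(u+v)/2$ and $y=-i(u-v)/2$, which lie in $\mathbb{Z}[i]$ precisely because $u\equiv v\pmod 2$, and then $x^{2}+y^{2}=\tfrac14\big((u+v)^{2}-(u-v)^{2}\big)=uv$. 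This is the step that accounts for the ``half-integer'' issue and explains why a congruence condition on $a,b$ is forced at all.

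Next I would collect the facts I need about $v_{1+i}$, the exponent of the canonical prime $1+i$ in the factorisation (one of the summands defining $\nu$): an element with $v_{1+i}=0$ is $\equiv 1$ or $i\pmod 2$; an element with $v_{1+i}=1$ is always $\equiv 1+i\pmod 2$, so any two of them are congruent mod $2$; and $v_{1+i}\ge 2$ forces $\equiv 0\pmod 2$. Since $2+2i=-i(1+i)^{3}$ has $v_{1+i}=3$ while $v_{1+i}(4)=4$, an ultrametric argument gives $v_{1+i}(w)=3$ exactly when $w\equiv 2+2i\pmod 4$, which for $w=a+2ib$ means $a\equiv 2\pmod 4$ and $b$ odd. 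Moreover $w=a+2ib$ has even imaginary part, so it cannot be $\equiv i$ or $\equiv 1+i\pmod 2$, and hence $v_{1+i}(w)=1$ never occurs. The theorem is thus equivalent to: $w$ is a sum of two Gaussian squares $\iff v_{1+i}(w)\neq 3$.

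For necessity, if $v_{1+i}(w)=3$ then in any factorisation $w=uv$ the valuations split as $\{0,3\}$ or $\{1,2\}$: in the first case one factor is odd and the other divisible by $2$, in the second one factor is $\equiv 1+i\pmod 2$ and the other $\equiv 0\pmod 2$; either way $u\not\equiv v\pmod 2$, so the lemma rules out a representation. (Equivalently, one can argue over $\mathbb{Z}$: writing $x=x_{1}+ix_{2}$, $y=y_{1}+iy_{2}$ and separating real and imaginary parts gives $x_{1}x_{2}+y_{1}y_{2}=b$ and $x_{1}^{2}-x_{2}^{2}+y_{1}^{2}-y_{2}^{2}=a$; if $b$ is odd, then after possibly swapping $x$ and $y$ both $x_{1},x_{2}$ are odd, so $x_{1}^{2}-x_{2}^{2}\equiv 0\pmod 8$ and the real equation forces $y_{1}^{2}-y_{2}^{2}\equiv 2\pmod 4$, which is impossible since $y_{1}y_{2}$ is then even.) For sufficiency, put $k:=v_{1+i}(w)$ and exhibit the pairing case by case: if $k=0$ then $w\equiv 1\pmod 2$ and $w=w\cdot 1$ works; $k=1$ does not occur; if $k=2$ then $w/(1+i)$ again has valuation $1$, so $w=(1+i)\cdot\big(w/(1+i)\big)$ with both factors $\equiv 1+i\pmod 2$; and if $k\ge 4$ then $4\mid w$ and $w=2\cdot(w/2)$ with both factors $\equiv 0\pmod 2$. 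Passing each pairing back through the lemma yields explicit $x,y$ — for instance $w=\big(1+\tfrac w4\big)^{2}+\big(i(1-\tfrac w4)\big)^{2}$ when $4\mid w$ — which completes the proof.

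The only real friction I anticipate is the bookkeeping in the second paragraph: correctly pinning down the residue of $w=a+2ib$ modulo $2$ and the attainable values of $v_{1+i}(w)$, and confirming that $v_{1+i}(w)=3$ is the one and only obstruction. Everything else is the two-line divisor lemma plus a finite check; in particular the argument does not invoke Theorem~\ref{prop1} and stays entirely self-contained around the prime $1+i$.
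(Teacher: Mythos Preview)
The paper does not actually prove this theorem: it is stated as the Niven--Mordell Theorem and invoked as a known result (the implicit reference is to Niven's 1940 paper \cite{Niven1940}), then used as a black box in the proof of Theorem~\ref{niven1}. There is therefore no in-paper argument to compare your proposal against.

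That said, your proof is correct and self-contained. The reduction lemma ---$x^{2}+y^{2}=w$ is solvable in $\mathbb{Z}[i]$ iff $w=uv$ with $u\equiv v\pmod 2$--- is exactly the right pivot, and your valuation bookkeeping at the ramified prime $1+i$ is accurate: $v_{1+i}(2+2i)=3$, the residues modulo $2$ are determined by $v_{1+i}$ as you describe, and for $w=a+2ib$ one always has $w\equiv a\pmod 2\in\{0,1\}$, so $v_{1+i}(w)=1$ is genuinely excluded and in the $k=0$ case $w\equiv 1\pmod 2$ automatically (this last point matters, since e.g.\ $i$ is \emph{not} a sum of two Gaussian squares, and your argument would break there without the hypothesis on the shape of $w$). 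The necessity case split $\{0,3\}\cup\{1,2\}$ and the sufficiency constructions for $k=0,2,\ge 4$ are all sound; the explicit identity $w=(1+w/4)^{2}+\big(i(1-w/4)\big)^{2}$ for $4\mid w$ checks out. Your parenthetical real/imaginary-part argument for necessity is also correct: with $b$ odd and $x_{1},x_{2}$ both odd one gets $x_{1}^{2}-x_{2}^{2}\equiv 0\pmod 8$, while $y_{1}y_{2}$ even forces $y_{1}^{2}-y_{2}^{2}\not\equiv 2\pmod 4$.

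In short, you have supplied a complete proof where the paper supplies none; your approach via the factorisation lemma and $(1+i)$-adic valuation is clean and arguably more conceptual than the direct congruence computations in Niven's original treatment.
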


\begin{theorem}\label{niven1}
Let $p \in C$, then there exist $x_0,y_0 \in \mathbb{Z}[i]$ such that $p = x_0^2 + y_0^2$ or $p = i(x_0^2 + y_0^2)$.
\end{theorem}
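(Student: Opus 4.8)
The plan is to derive this from the Niven--Mordell Theorem by a short parity analysis. First I would unpack the hypothesis: $p \in C$ means $N(p) \equiv 5 \pmod 8$, and since this norm is odd it is neither $2$ nor the square of a rational prime (odd squares are $\equiv 1 \pmod 8$), so $N(p)$ is a rational prime $\pi \equiv 5 \pmod 8$. Writing $p = c + di$ with $c, d \in \mathbb{Z}$, we have $c^2 + d^2 = \pi$ (and $(c,d)$ lies in the canonical sector, $c > 0$, $-c < d \le c$, although this will not actually be needed).

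Next I would extract what $c^2 + d^2 \equiv 5 \pmod 8$ forces on the parities of $c$ and $d$. Since the sum is odd, exactly one of $c, d$ is odd; and since an odd square is $\equiv 1$ while an even square is $\equiv 0$ or $4 \pmod 8$, the residue $5$ forces the even one of the two to be $\equiv 2 \pmod 4$. So up to the roles of the coordinates there are two cases: (i) $c$ odd, $d \equiv 2 \pmod 4$; and (ii) $c \equiv 2 \pmod 4$, $d$ odd.

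In case (i), $p = c + di$ is already in the shape $a + 2ib$ required by the Niven--Mordell Theorem, with $a = c$ and $b = d/2$; since $a = c$ is odd, the forbidden conjunction $a \equiv 2 \pmod 4$ and $b \equiv 1 \pmod 2$ cannot hold, so the theorem yields $x_0, y_0 \in \mathbb{Z}[i]$ with $p = x_0^2 + y_0^2$. In case (ii) I would first multiply by the unit $i$: then $ip = -d + ci = a + 2ib$ with $a = -d$ (odd) and $b = c/2$, so again the Niven--Mordell exclusion fails and $ip = x^2 + y^2$ for some $x, y \in \mathbb{Z}[i]$; multiplying back by $-i$ and using $(ix)^2 = -x^2$ gives $p = -i(x^2 + y^2) = i\big((ix)^2 + (iy)^2\big)$, which is the stated second form with $x_0 = ix$, $y_0 = iy$.

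The step I would watch most carefully is the bookkeeping that keeps us inside the Niven--Mordell hypothesis: confirming that the norm congruence really does pin the even coordinate down to $2 \pmod 4$ (so that once the $a \equiv 2 \pmod 4$ half of the forbidden pair is already violated, the $b \equiv 1 \pmod 2$ half is irrelevant), and confirming that multiplication by $i$ in case (ii) merely permutes the real and imaginary parts up to sign and hence preserves the parities we need. Beyond that the argument is a direct appeal to the quoted representation theorem, so I do not expect a genuine obstacle.
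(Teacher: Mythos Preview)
Your proposal is correct and follows essentially the same route as the paper: both argue that $N(p)$ is a rational prime $\equiv 5 \pmod 8$, split into the two parity cases for the real and imaginary parts, and then invoke the Niven--Mordell Theorem. If anything, your write-up is more explicit than the paper's, which handles the second case (your case (ii), leading to the $i(x_0^2+y_0^2)$ alternative) only by the parenthetical remark ``or $2b+ai$'' without spelling out the multiplication by $i$ that you carry out in detail.
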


\begin{proof}
Theorem \ref{niven1} is a direct consequence of the Niven-Mordell theorem. Say $N(p) = q$ where $q$ is obviously prime and $q = x^2 + y^2$ where $x$ and $y$ are unique up-to signs and permutations, a consequence of Fermat's fundamental theorem. Since $q \equiv 5 \mod 8$, $x = a$ and $y = 2b$ where $(a,2b)=1$ and $ab \equiv 1 \mod 2$ (another possibility was $x = 2b$ and $y= a$ subject to the same constraints). We can therefore conclude that $p$ was actually $p = a + 2bi$ (or $2b + ai$) and therefore the Niven-Mordell theorem applies, which concludes the proof of theorem \ref{niven1}.
\end{proof}

\section{$x^2 + iy^2 + w^2 + iz^2$ is universal over Gaussian Integers}

\begin{lemma}
Given a commutative ring $R$ and the expressions $Ax_1^2 + By_1^2 + Az_1^2 + Bw_1^2$, $Ax_2^2 + By_2^2 + Az_2^2 + Bw_2^2$, their product is $X^2 + (AB)Y^2 + Z^2 + (AB)W^2$, where
\begin{equation}
\begin{cases}
X = Ax_1x_2 + By_1y_2 + Az_1z_2 + Bw_1w_2 \\
Y = x_1y_2 - y_1x_2 - z_1w_2 + w_1z_2 \\
Z = Ax_1z_2 + By_1w_2 - Az_1x_2 - Bw_1y_2 \\
W = x_1w_2 - y_1z_2 + z_1y_2 - w_1x_2
\end{cases}
\end{equation}
\end{lemma}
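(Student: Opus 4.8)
The plan is to reduce the identity to two classical polynomial identities (each with integer coefficients, hence valid over an arbitrary commutative ring $R$): a weighted version of Lagrange's identity, and the Grassmann--Pl\"ucker relation for the six $2\times 2$ minors of a $2\times 4$ matrix. Let $m_{12}=x_1y_2-y_1x_2$, $m_{13}=x_1z_2-z_1x_2$, $m_{14}=x_1w_2-w_1x_2$, $m_{23}=y_1z_2-z_1y_2$, $m_{24}=y_1w_2-w_1y_2$, and $m_{34}=z_1w_2-w_1z_2$ denote these minors (rows $(x_1,y_1,z_1,w_1)$ and $(x_2,y_2,z_2,w_2)$, columns indexed by $x,y,z,w$ in that order), and write $q(x,y,z,w)=Ax^2+By^2+Az^2+Bw^2$ for the quaternary form in the statement, with $b$ its associated symmetric bilinear form.

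First I would record that the claimed $X$ is precisely $b\bigl((x_1,y_1,z_1,w_1),(x_2,y_2,z_2,w_2)\bigr)$, and that a one-line expansion --- the diagonal terms cancel and the off-diagonal ones pair up into perfect squares --- yields the weighted Lagrange identity
\[
q(x_1,y_1,z_1,w_1)\,q(x_2,y_2,z_2,w_2)=X^2+AB\,m_{12}^2+A^2m_{13}^2+AB\,m_{14}^2+AB\,m_{23}^2+B^2m_{24}^2+AB\,m_{34}^2 .
\]
Next I would observe that the remaining quantities of the statement are exactly $Y=m_{12}-m_{34}$, $Z=A\,m_{13}+B\,m_{24}$ and $W=m_{14}-m_{23}$; squaring these and collecting terms then gives
\[
X^2+AB\,Y^2+Z^2+AB\,W^2=q(x_1,y_1,z_1,w_1)\,q(x_2,y_2,z_2,w_2)-2AB\bigl(m_{12}m_{34}-m_{13}m_{24}+m_{14}m_{23}\bigr).
\]
Finally I would invoke the Grassmann--Pl\"ucker relation $m_{12}m_{34}-m_{13}m_{24}+m_{14}m_{23}=0$ --- a short direct check, or the Laplace expansion along two rows of a $4\times 4$ determinant having a repeated pair of rows --- which annihilates the correction term and delivers the assertion.

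I do not expect a genuine obstacle; the content is bookkeeping. What keeps the argument short --- and what one has to see --- is that the particular signs and the placement of the weights $A,B$ in $Y$, $Z$, $W$ are arranged so that on expansion every cross term assembles into a single copy of the Pl\"ucker relation; under a different grouping, or if one simply multiplies out both sides into their roughly eighty monomials, the same identity emerges only after much heavier calculation. I would also remark that $q$ is, after permuting coordinates, $\langle A,A,B,B\rangle$, a scalar multiple of a $2$-fold Pfister form --- equivalently, of the norm form of a quaternion algebra over the fraction field of $R$ --- so that a bilinear composition law of this shape is to be expected; the case $A=B=1$ is just the classical four-square identity.
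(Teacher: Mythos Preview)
Your argument is correct. The identification $Y=m_{12}-m_{34}$, $Z=Am_{13}+Bm_{24}$, $W=m_{14}-m_{23}$ is accurate, the weighted Lagrange identity you quote is the standard one for a diagonal form with weights $(A,B,A,B)$, and the cross terms you collect really do assemble into $-2AB$ times the Pl\"ucker relation, which vanishes identically. So the proposal constitutes a complete proof over any commutative ring.

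As for comparison: the paper offers no proof of this lemma at all; it is simply asserted, with the understanding that one may verify it by expanding both sides. Your route is therefore not just different but strictly more informative. Where a brute-force expansion would involve matching on the order of eighty monomials, you explain \emph{why} the particular sign choices and the placement of $A$, $B$ in $Z$ work: they are precisely what is needed to make all six minor-squares appear with the correct weights and to force every cross term into a single copy of $m_{12}m_{34}-m_{13}m_{24}+m_{14}m_{23}$. Your closing remark that $\langle A,B,A,B\rangle$ is (up to reordering) a scalar times a $2$-fold Pfister form also situates the identity conceptually, something the paper does not attempt. The only caution is cosmetic: the paper works over an arbitrary commutative ring, so the Pfister/quaternion interpretation over a fraction field is heuristic rather than part of the proof proper --- but you flag this yourself, and the formal argument you give is purely polynomial and needs no such hypothesis.
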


\begin{corollary}
Over $\mathbb{Z}[i]$ the product of the expressions $x_1^2 + iy_1^2 + z_1^2 + iw_1^2$ and $x_2^2 + iy_2^2 + z_2^2 + iw_2^2$ is $X^2 + iY^2 + Z^2 + iW^2$ where,
\begin{equation}
\begin{cases}
X = x_1x_2 + iy_1y_2 + z_1z_2 + iw_1w_2 \\
Y = x_1y_2 - y_1x_2 - z_1w_2 + w_1z_2 \\
Z = x_1z_2 + iy_1w_2 - z_1x_2 - iw_1y_2 \\
W = x_1w_2 - y_1z_2 + z_1y_2 - w_1x_2
\end{cases}
\end{equation}
\end{corollary}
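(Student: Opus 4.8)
The plan is to obtain the corollary as an immediate specialization of the preceding lemma. Take $R = \mathbb{Z}[i]$, which is a commutative ring, and set $A = 1$ and $B = i$. With this choice the lemma's two generic expressions $Ax_j^2 + By_j^2 + Az_j^2 + Bw_j^2$ become $x_j^2 + iy_j^2 + z_j^2 + iw_j^2$ for $j = 1,2$, and the lemma asserts that their product equals $X^2 + (AB)Y^2 + Z^2 + (AB)W^2$. Since $AB = 1\cdot i = i$ in $\mathbb{Z}[i]$, this product is exactly $X^2 + iY^2 + Z^2 + iW^2$, the shape claimed in the corollary.

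It then remains only to check that the bilinear forms $X, Y, Z, W$ of the lemma reduce to those in the corollary under $A = 1$, $B = i$. Substituting into the lemma's formulas gives $X = x_1x_2 + iy_1y_2 + z_1z_2 + iw_1w_2$; the expression for $Y$ involves neither $A$ nor $B$, so it is unchanged; $Z = x_1z_2 + iy_1w_2 - z_1x_2 - iw_1y_2$; and $W$ likewise carries no coefficients. These coincide verbatim with the corollary's display, so nothing further is required.

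There is no genuine obstacle here: the only point deserving care is that the ring is commutative, which is what makes the cross terms in the underlying eight-variable polynomial expansion cancel in pairs (the cancellations in the lemma use identities such as $x_jy_k = y_kx_j$, valid in $\mathbb{Z}[i]$). For completeness one could bypass the lemma altogether and verify the stated identity directly, by expanding $(x_1^2 + iy_1^2 + z_1^2 + iw_1^2)(x_2^2 + iy_2^2 + z_2^2 + iw_2^2)$ and $X^2 + iY^2 + Z^2 + iW^2$ as polynomials over $\mathbb{Z}[i]$ and matching coefficients; this is the computation that establishes the lemma in the first place, being a variant of Euler's four-square identity with structure constants twisted by $i$.
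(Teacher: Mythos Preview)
Your proposal is correct and matches the paper's approach exactly: the corollary is stated immediately after the lemma with no separate proof, so it is intended to follow by the very specialization $R=\mathbb{Z}[i]$, $A=1$, $B=i$ that you carry out. Your additional remarks about commutativity and the option of a direct polynomial verification are accurate but go beyond what the paper records.
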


\begin{theorem}
$x^2 + iy^2 + z^2 + iw^2$ is universal over $\mathbb{Z}[i]$
\end{theorem}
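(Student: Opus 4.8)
The plan is to reduce the universality of $f(x,y,z,w)=x^2+iy^2+z^2+iw^2$ to the three cases provided by the partition $\Pi'=A\cup B\cup C$ of Proposition~1, using the multiplicativity Corollary as the glue. First I would observe that, by the factorization $z=i^s p_1^{\alpha_1}\cdots p_k^{\alpha_k}$ into canonical primes, it suffices to represent: (i) each unit $i^s$, (ii) each prime of $A$, (iii) the prime $1+i$ of $B$, and (iv) each prime of $C$. Indeed, if every canonical prime is represented by $f$ and each unit is represented by $f$, then by the Corollary the product of any finite collection of such representations is again a value of $f$, so every $z\in\mathbb{Z}[i]$ is represented. (One should check the composition identity does produce $i^s$ correctly; e.g. $1=1^2+i\cdot0^2+0^2+i\cdot0^2$, $i=0^2+i\cdot1^2+0^2+i\cdot0^2$, and then $-1,-i$ either directly as $-1=0^2+i\cdot 0^2+0^2+i\cdot(1)^2\cdot$? — no: one instead gets $-1$ and $-i$ by composing, or directly notes $-1 = i\cdot i\cdot(\text{something})$; the cleanest is to verify all four units by hand.)

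Next I would dispatch the three prime cases. For $p\in A$, Theorem~\ref{prop1} gives $x_0,y_0$ with $x_0^2+iy_0^2=p$, hence $p=x_0^2+iy_0^2+0^2+i\cdot0^2$ is a value of $f$ directly. For $p\in C$, Theorem~\ref{niven1} gives $x_0,y_0$ with $p=x_0^2+y_0^2$ or $p=i(x_0^2+y_0^2)$; in the first subcase $p=x_0^2+i\cdot0^2+y_0^2+i\cdot0^2$ works immediately, and in the second subcase I would write $i(x_0^2+y_0^2)=0^2+i x_0^2+0^2+i y_0^2$, which is again a value of $f$. For $p=1+i\in B$, I would just exhibit a representation by inspection: $1+i=1^2+i\cdot1^2+0^2+i\cdot0^2$.

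Finally, assembling: given arbitrary $z=i^s p_1^{\alpha_1}\cdots p_k^{\alpha_k}$, I write $z$ as a product in which each factor is one of $i^s$ (the leading unit, handled as above) or a canonical prime $p_t$ repeated $\alpha_t$ times, each such factor being a value of $f$ by the previous paragraph, and then invoke the Corollary repeatedly (induction on $\nu(z)$ plus the unit) to conclude $z$ is a value of $f$. The only genuinely delicate point is bookkeeping the unit: the composition formula in the Corollary is only guaranteed to be \emph{a} value of $f$, and one must make sure that multiplying the represented prime product by a represented unit still lands in the image — this is automatic from the Corollary since the unit itself is a value of $f$, but it is worth stating carefully. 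I expect this unit/associate bookkeeping to be the main obstacle; the prime cases themselves are immediate from the theorems already proved.
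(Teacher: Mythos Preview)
Your proposal is correct and follows essentially the same route as the paper: reduce to canonical primes via $z=i^s z'$, invoke Theorem~1 for $A$, Theorem~3 for $C$, check $1+i$ by hand, and glue with the multiplicative Corollary. Your hesitation about the units is unnecessary: all four are represented directly (e.g.\ $-1=i^2+i\cdot0^2+0^2+i\cdot0^2$ and $-i=0^2+i\cdot i^2+0^2+i\cdot0^2$), and the paper dispatches this even more simply by observing that multiplying the form by $i$ just permutes and rescales the variables, so the image of $f$ is closed under multiplication by units.
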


\begin{proof}
Since any $z \in \mathbb{Z}[i]$ can be expressed as $z = i^sz'$ where all primes contained in $z'$ are from $\Pi'$ it is sufficient to show that $x^2 + iy^2 + z^2 + iw^2$ expresses any canonical prime element. The argument that this expression will represent $z'$ follows from corollary  1. Theorem 1 guarantees this for $p \in A$, theorem 3 guarantees this for $p \in C$ and by a simple check it also holds for $(1+i)$. One should notice that multiplying the expression $x^2 + iy^2 + z^2 + iw^2$  by elements of the unit group does not change it, which completes the proof.

\end{proof}

\section{Ramanujan's Forms}

Interestingly, if we restrict the arguments $y$ and $w$ to the complex line $(1-i)t$, and $x$ and $z$ to the real line, we will obtain one of Ramanujan's forms, explicitly: $x^2 + 2y^2 + z^2 + 2w^2$, where $x,y,z,w \in \mathbb{Z}$ \cite{Ramanujan1916}. By imposing other similar restrictions on $x,y,z$ and $w$, we obtain the forms $[1, 2, 1, 8], [1, 2, 4, 2], [1, 2, 4, 8]$, raising the question regarding the connection between imaginary forms and forms over real fields.

\bibliographystyle{ieeetr}
\bibliography{references}

\end{document}